\documentclass[a4paper]{amsart}
\usepackage[leqno]{amsmath}
\usepackage{amssymb}
\usepackage{amscd}
\usepackage{amsthm}
\usepackage{mathrsfs}
\usepackage{mathtools}
\usepackage{bbm}
\usepackage{color}
\usepackage{enumerate}
\usepackage{cite}
\usepackage[utf8]{inputenc}
\usepackage[all,cmtip]{xy}
\usepackage{etoolbox}
\usepackage{tikz}
\usepackage{extarrows}

\numberwithin{equation}{section}

\newcommand{\Z}{\ensuremath{\mathbb{Z}}}
\newcommand{\Q}{\ensuremath{\mathbb{Q}}}
\newcommand{\R}{\ensuremath{\mathbb{R}}}
\newcommand{\C}{\ensuremath{\mathbb{C}}}

\DeclareMathOperator{\Hom}{Hom}

\DeclareMathOperator{\SL}{SL}
\DeclareMathOperator{\ord}{ord}

\DeclareMathOperator{\Coind}{Coind}

\DeclareMathOperator{\res}{res}

\DeclareMathOperator{\qd}{qd}

\DeclareMathOperator{\HH}{H}

\DeclareMathOperator{\Dr}{\mathcal{H}}

\newcommand{\BS}{\ensuremath{\mathbb{P}_B}}
\newcommand{\PP}{\ensuremath{\mathbb{P}^1}}

\DeclareMathOperator{\red}{red}

\DeclareMathOperator{\Div}{Div}
\DeclareMathOperator{\supp}{supp}

\newcommand{\p}{\ensuremath{\mathfrak{p}}}
\newcommand{\q}{\ensuremath{\mathfrak{q}}}

\newcommand{\into}{\hookrightarrow}

\newcommand{\too}{\longrightarrow}								
\newcommand{\mapstoo}{\longmapsto}

\newtheorem{Lem}{Lemma}
\makeatletter
\newlength{\@thlabel@width}%
\newcommand{\thmenumhspace}{\settowidth{\@thlabel@width}{\itshape1.}\sbox{\@labels}{\unhbox\@labels\hspace{\dimexpr-\leftmargin+\labelsep+\@thlabel@width-\itemindent}}}
\makeatother
\newtheorem{Pro}[Lem]{Proposition}

\newtheorem{Thm}[Lem]{Theorem}

\newtheorem{Cor}[Lem]{Corollary}

\theoremstyle{definition}
\newtheorem{Def}[Lem]{Definition}
\newtheorem{Rem}[Lem]{Remark}

\author[L. Gehrmann]{Lennart Gehrmann}
\address{L. Gehrmann \\ Fakult\"at f\"ur Mathematik \\ Universit\"at Duisburg-Essen \\ Thea-Leymann-Stra\ss e 9 \\ 45127 Essen \\ Germany}
\email{lennart.gehrmann@uni-due.de}

\title[On quaternionic rigid meromorphic cocyles]{On quaternionic rigid meromorphic cocycles}
\subjclass[2010]{}

\setcounter{tocdepth}{1}

\begin{document}

\begin{abstract}
Recently, Darmon and Vonk initiated the theory of rigid meromorphic cocycles for the group $\SL_2(\Z[1/p])$.
One of their major results is the algebraicity of the divisor associated to such a cocycle.
We generalize the result to the setting of $\p$-arithmetic subgroups of inner forms of $\SL_2$ over arbitrary number fields.
The method of proof differs from the one of Darmon and Vonk.
Their proof relies on an explicit description of the cohomology via modular symbols and continued fractions, whereas our main tool is Bieri-Eckmann duality for arithmetic groups.
\end{abstract}

\maketitle

\tableofcontents

\subsection*{Introduction}
In \cite{DV} Darmon and Vonk initiate the theory of $p$-adic singular moduli for real quadratic fields.
In this theory classical modular functions such as the $j$-invariant are replaced by so-called rigid meromorphic cocycles.
These are $\SL_2(\Z[1/p])$-invariant modular symbols with values in rigid meromorphic functions on Drinfeld's $p$-adic upper half plane. 
One of their first results (see Theorem 1 of \textit{loc.~cit.}) states that the divisor of a rigid meromorphic cocycle is supported on finitely many $\SL_2(\Z[1/p])$-orbits of real quadratic points, i.e.~ points which generate real quadratic extensions of $\Q$.
This highly suggests that rigid meromorphic cocyles are a real quadratic analogue of Borcherds' singular theta lifts of modular forms of weight $1/2$. 

It is a natural question to ask whether this result is true in more general situations. For instance, one may replace the group $\SL_2(\Z[1/p])$ by a $p$-arithmetic congruence subgroup, consider arbitrary ground fields and inner forms of $\SL_2$.
The proof of Darmon and Vonk relies on rather explicit methods, i.e.~it uses explicit generators of $\SL_2(\Z)$ to reduce the question to one about continued fractions following an approach of Choie--Zagier (cf.~\cite{ChZa}). 
This approach does not generalize easily to a more general setup.
The aim of this note is to prove the algebraicity of divisors in the before mentioned general situation by purely cohomological methods.\footnote[1]{The approach taken in this article is closely related to the classification of rational period functions by Ash (see \cite{Ash}).}

The statement becomes a bit more involved:
the divisor of a rigid meromorphic cocycle is still supported on finitely many orbits of quadratic points, i.e.~points that generate quadratic extension of the ground field.
But their splitting behaviour at infinity depends on the inner form of $\SL_2$ and the degree of cohomology we are working with.
For example, let us suppose that the ground $F$ is totally real and let $\left\{\infty_1,\ldots,\infty_d\right\}$ be the set of Archimedean places of $F$.
Assume that the inner form of $\SL_2$ is split at $\infty_1,\ldots,\infty_r$ and non-split at $\infty_{r+1},\ldots,\infty_d$.
Then, the divisor of a meromorphic cocycle in middle degree cohomology is supported on orbits of points, which generate quadratic extensions in which $\infty_1,\ldots,\infty_r$ split and $\infty_{r+1},\ldots,\infty_d$ do not.
If we move beyond middle degree, quadratic extensions in which $\infty_1,\ldots,\infty_r$ are non-split appear as well. 

As a by-product of our results we notice the following surprising fact:
if one wants to generalize Darmon and Vonk's theory of singular moduli one is forced to work with totally real ground fields. 

We hope that our more conceptual approach will pave the way to a generalization of the theory to groups beyond $\SL_2$.

\subsection*{Acknowledgements}
While working on this manuscript I was visiting McGill University, supported by Deutsche Forschungsgemeinschaft, and I would like to thank these institutions.
I am grateful to Xavier Guitart, Marc Masdeu and Xavier Xarles for pointing out a mistake in an earlier draft of the article.
I thank Henri Darmon, Mathilde Gerbelli-Gauthier and the anonymous referee for their comments, which helped to improve the exposition.
It is my pleasure to thank Markus Severitt, who explained to me what a Brauer-Severi variety is a long time ago.

\subsection*{Setup}
We fix an algebraic number field $F$ and a finite place $\p$ of $F$ lying above the rational prime $p$.
Let $F_\p$ be the completion of $F$ at $\p$ with ring of integers $\mathcal{O}_\p.$

In addition, we fix a quaternion algebra $B$ over $F$ that is split at $\p$.
Let $G$ be the the group of elements of reduced norm one of $B$ viewed as an algebraic group over $F$ and put $G_\p=G(F_\p)$.
We denote by $\BS$ the Brauer-Severi variety associated to $B$, i.e.~for every field extension $E/F$ we have
$$\BS(E)=\left\{I\unlhd B\otimes_F E \mbox{ left ideal}\mid \dim_E I= 2\right\}.$$
It is a smooth curve over $F$ such that $\BS(E)\neq\emptyset$ if and only if $E$ is a splitting field of $B$.
In that case we can choose compatible non-canonical isomorphisms $\mathbb{P}_{B,E}\cong \mathbb{P}^1_E$ and $G_E\cong \SL_{2,E}$. 
If $B$ is already split, we will identify $\BS$ with $\mathbb{P}^{1}_F$ and $G$ with $\SL_{2,F}.$

Let $r_1(B)$ be the number of real places of $F$ at which $B$ is split and $r_2$ the number of complex places of $F$ and put $r=r_1(B)+r_2.$
Further, we put $d=2\cdot r_1(B)+3\cdot r_2$.
This is the real dimension of the symmetric space associated to $G(F\otimes_\Q \R)$.

\section{The $p$-adic upper half plane}
\subsection{Divisors on the $p$-adic upper half plane}\label{Divisors}
Let $\C_p$ be the completion of a fixed algebraic closure of $F_\p$.
Let us remind ourselves that the $p$-adic upper half plane $\Dr_p$ associated to $F_\p$ is the rigid analytic space over $\C_p$ whose $\C_p$-valued points are given by
$$\Dr_{p}(\C_p)=\PP(\C_p)\smallsetminus \PP(F_{\p}).$$

We will work with the following coordinate-free version:
the $p$-adic upper half plane $\Dr_{B_\p}$ associated to $G_{F_\p}$ is the rigid analytic space over $\C_p$ such that
$$\Dr_{B_\p}(\C_p)=\BS(\C_p)\smallsetminus \BS(F_{\p}).$$
It is non-canonically isomorphic to $\Dr_{p}$ but carries a canonical action of the group $G_\p$.

Given a rigid analytic subvariety $X\subseteq \Dr_{B_\p}$ we often also write $X$ for the set of $\C_p$-valued points of $X$.

Given a subset $X\subseteq \Dr_{B_\p}$ we denote by $\Div(X)$ the space of all maps from $X$ to $\Z$ with finite support.
Further, we define $\Div^{\dagger}(X)$ to be the space of locally finite divisors on $X$, that is, the space of all maps $D\colon X\to \Z$ such that for every affinoid subvariety $Y\subseteq \Dr_{B_\p}$ the restriction $\left.D\right|_{(X\cap Y)}$ is an element of $\Div (X\cap Y)$.
 
Let $\mathcal{T}=(\mathcal{V},\mathcal{E})$ be the unoriented Bruhat-Tits tree of $G_\p$, i.e.
\begin{itemize}
\item $\mathcal{V}_{\mathcal{T}}$ is the set of maximal orders of $B_{F_\p}$ and
\item there is an edge in $\mathcal{E}_{\mathcal{T}}$ connecting $v,v^{\prime}\in\mathcal{V}$ if and only if the intersection of the corresponding orders is an Eichler order of level $\p$.
\end{itemize}
We denote by $\red\colon\Dr_{B_\p}(\C_p)\to \mathcal{T}$ the $G_\p$-equivariant reduction map from the $p$-adic upper half plane to the tree.
Suppose $D\in\Div^{\dagger}(\Dr_{B_\p})$ is a locally finite divisor on $\Dr_{B_\p}$.
Then $\left.D\right|_{\red^{-1}(v)}$ (respectively $\left.D\right|_{\red^{-1}(e)}$) is an element of $\Div (\red^{-1}(v))$ (respectively $\Div (\red^{-1}(e))$) for any vertex $v$ (respectively edge $e$) of $\mathcal{T}.$ 
Given a vertex $v$ (respectively edge $e$) of $\mathcal{T}$ we denote its stabilizer by $K_{\p,v}$ (respectively $K_{\p,e}$).

\begin{Lem}\label{coind}
Let $v$ and $\bar{v}$ be two adjacent vertices of $\mathcal{T}$ with connecting edge $e$.
There exists a canonical isomorphism of $G_\p$-modules:
\begin{align*}\Div^{\dagger}(\Dr_{B_\p}) \cong &\Coind_{K_{\p,v}}^{G_\p}\Div(\red^{-1}(v))\
 \oplus\ \Coind_{K_{\p, \bar{v}}}^{G_\p}\Div(\red^{-1}(\bar{v}))\\
 \oplus &\Coind_{K_{\p,e}}^{G_\p}\Div(\red^{-1}(e)).
\end{align*}
\end{Lem}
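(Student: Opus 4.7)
The plan is to exploit the partition of $\Dr_{B_\p}(\C_p)$ induced by the reduction map, and then to reorganise the resulting pieces according to the $G_\p$-orbit structure of $\mathcal{T}$. Viewing $\mathcal{T}$ via its geometric realisation, the reduction map gives a disjoint decomposition
$$\Dr_{B_\p}(\C_p) \;=\; \bigsqcup_{v' \in \mathcal{V}} \red^{-1}(v')\ \sqcup\ \bigsqcup_{e' \in \mathcal{E}} \red^{-1}(e'),$$
in which each $\red^{-1}(v')$ is a standard affinoid subdomain and each $\red^{-1}(e')$ is an open annulus. Since each of these pieces is contained in an affinoid subdomain of $\Dr_{B_\p}$, a locally finite divisor restricts to a genuinely finite one on each of them. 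Collecting these restrictions produces a canonical $G_\p$-equivariant identification
$$\Div^\dagger(\Dr_{B_\p}) \;\cong\; \prod_{v' \in \mathcal{V}} \Div(\red^{-1}(v'))\ \oplus\ \prod_{e' \in \mathcal{E}} \Div(\red^{-1}(e')).$$

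Next I would decompose the two index sets into $G_\p$-orbits. Since $B$ is split at $\p$, the tree $\mathcal{T}$ is the Bruhat-Tits tree of $\SL_2(F_\p)$, on which $G_\p$ acts with exactly two vertex orbits, represented by the two ends $v$ and $\bar v$ of $e$ (the two colours of the bipartite structure), together with a single orbit of edges, represented by $e$. With the stated stabilisers this gives
$$\mathcal{V} \;=\; G_\p/K_{\p,v}\ \sqcup\ G_\p/K_{\p,\bar v}, \qquad \mathcal{E} \;=\; G_\p/K_{\p,e},$$
and the big product above splits correspondingly into a direct sum of three factors.

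To finish, I would identify each of these three factors with the claimed coinduced module. For a fixed base point, say $v$, the action of $g \in G_\p$ on $\Dr_{B_\p}$ maps $\red^{-1}(v)$ onto $\red^{-1}(gv)$ in a $K_{\p,v}$-equivariant manner, so by the very definition of coinduction, the product over the orbit $G_\p v$ is canonically isomorphic to $\Coind_{K_{\p,v}}^{G_\p} \Div(\red^{-1}(v))$ as a $G_\p$-module; the analogous statements hold for $\bar v$ and for $e$. Assembling the three isomorphisms yields the asserted decomposition.

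The only substantive step is the first one: verifying that the reduction map really does produce a disjoint decomposition of $\Dr_{B_\p}(\C_p)$ into pieces small enough that local finiteness reduces to finiteness on each piece. Once this geometric input is granted, the rest is a purely group-theoretic bookkeeping exercise, translating a transitive $G_\p$-set into a coinduced representation.
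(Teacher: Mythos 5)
Your proposal is correct and follows essentially the same route as the paper's own proof: both rest on the reduction map decomposing $\Dr_{B_\p}$ along the tree and on the fact that $\{v,\bar v,e\}$ is a fundamental domain for the $G_\p$-action on $\mathcal{T}$, then identify each orbit's contribution with a coinduced module. Your version simply spells out the intermediate product decomposition that the paper leaves as ``easy to see,'' which is a fine way to present the same argument.
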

\begin{proof}
Let us remind ourselves that the coinduction of $\Div(\red^{-1}(v))$ from $K_{\p,v}$ to $G_\p$ is the space of all functions $f\colon G_\p \to \Div(\red^{-1}(v))$ such that $f(kg)=k.(f(g))$ holds for all $k\in K_{\p,v}$ and $g\in G_\p.$
It is easy to see that the map
$$
\Coind_{K_{\p,v}}^{G_\p}\Div(\red^{-1}(v)) \too \Div^{\dagger}(\Dr_{B_\p}),\quad f\mapstoo \sum_{g\in K_{\p,v}\backslash G_\p} g^{-1}.(f(g))
$$
is well-defined, injective and $G_\p$-equivariant.
Similarly we can define injective $G_\p$-equivariant maps
\begin{align*}
\Coind_{K_{\p,\bar{v}}}^{G_\p}\Div(\red^{-1}(\bar{v})) &\too \Div^{\dagger}(\Dr_{B_\p})\\
\intertext{and}
\Coind_{K_{\p,e}}^{G_\p}\Div(\red^{-1}(e)) &\too \Div^{\dagger}(\Dr_{B_\p}).
\end{align*}
The claim follows since the set $\{v, \bar{v},e\}$ is a fundamental domain for the action of $G_\p\cong\SL_2(F_\p)$ on $\mathcal{T}$ (see for example \cite{Trees}, Section II.1.4, Theorem 2).
\end{proof}

\subsection{Functions on the $p$-adic upper half plane}\label{Functions}
In this section we recall well-known facts about rigid analytic functions on $\Dr_{B_\p}.$
Let $\mathcal{M}^{\times}$ the multiplicative group of rigid meromorphic functions on $\Dr_{B_\p}$ and $\mathcal{A}^{\times}\subseteq\mathcal{M}^{\times}$ the subgroup of invertible rigid analytic functions.
For a meromorphic function $f\in\mathcal{M}^{\times}$ and a point $z\in\Dr_{B_\p}(\C_p)$ the order of vanishing of $f$ at $z$ is denoted by $\ord_z(f)\in\Z$.
The map
$$\Div\colon\mathcal{M}^{\times}\too \Div^{\dagger}(\Dr_{B_\p}),\quad f \mapstoo [z\mapsto \ord_z(f)]$$
is well-defined and $G_\p$-equivariant with respect to the natural action on both sides.
\begin{Pro}\label{divseq}
The sequence
$$0\too\mathcal{A}^{\times}\too\mathcal{M}^{\times}\too \Div^{\dagger}(\Dr_{B_\p}) \too 0$$
is exact.
\end{Pro}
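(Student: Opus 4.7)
The plan is to handle the formal parts first and isolate the single analytic input that carries the content.

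\emph{Formal exactness.} The inclusion $\mathcal{A}^{\times}\hookrightarrow\mathcal{M}^{\times}$ is injective by definition. An element $f\in\mathcal{M}^{\times}$ lies in the kernel of $\Div$ if and only if it has neither zeros nor poles on $\Dr_{B_\p}$, which is exactly the defining condition for membership in $\mathcal{A}^{\times}$. Hence exactness at the first two places is immediate, and the real content of the proposition is surjectivity of $\Div$.

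\emph{Reduction to finite divisors.} I would fix an admissible exhaustion $\Dr_{B_\p}=\bigcup_{n\ge 1}X_{n}$ by a nested family of affinoid subdomains, e.g.\ the standard one obtained by pulling back, via $\red$, an increasing sequence of finite connected subtrees of $\mathcal{T}$. Given $D\in\Div^{\dagger}(\Dr_{B_\p})$, local finiteness forces each restriction $D|_{X_{n}}$ to be a finite divisor. After extending scalars and fixing an isomorphism $\BS_{\C_p}\cong\mathbb{P}^{1}_{\C_p}$ with coordinate $T$, this finite divisor is the divisor of the explicit rational function
$$f_{n}=\prod_{z\in\supp(D)\cap X_{n}}(T-z)^{D(z)},$$
viewed as a rigid meromorphic function on $\Dr_{B_\p}$.

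\emph{Gluing.} The ratios $u_{n}=f_{n+1}/f_{n}\in\mathcal{A}^{\times}(X_{n})$ measure the failure of the $f_{n}$ to agree on overlaps. To assemble them into a global meromorphic function one would inductively replace $f_{n+1}$ by $f_{n+1}\cdot w_{n}^{-1}$, where $w_{n}\in\mathcal{A}^{\times}(X_{n+1})$ is a unit whose restriction to $X_{n}$ approximates $u_{n}$ closely enough; the corrected sequence then converges on each $X_{n}$ to a rigid meromorphic function with the prescribed divisor. The cleanest packaging is to invoke the long exact cohomology sequence attached to the sheaf version $0\to\mathcal{O}^{\times}\to\mathcal{M}^{\times}\to\underline{\Div}^{\dagger}\to 0$ on $\Dr_{B_\p}$ and observe that the obstruction to surjectivity at the level of global sections lies in $H^{1}(\Dr_{B_\p},\mathcal{O}^{\times})$.

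\emph{Main obstacle.} All the analytic content therefore collapses onto the vanishing of the Picard group of $\Dr_{B_\p}$, or, equivalently, a Runge-type approximation statement for invertible analytic functions on the nested affinoids $X_{n}\subseteq X_{n+1}$. For the classical Drinfeld upper half plane $\Dr_{p}$ both statements are well known (Gerritzen--van der Put); the quaternionic variant reduces to this case by base change to $\C_p$, over which $\Dr_{B_\p}\cong\Dr_{p}$ non-canonically. Everything else in the argument is bookkeeping.
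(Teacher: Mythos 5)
The paper does not actually prove this proposition — it simply cites van der Put (Proposition 2.2 of \cite{vdP2}) — so any self-contained argument you give is by necessity ``a different route.'' Your outline is a sensible reconstruction of what that cited proof looks like: exactness at the first two terms is formal, the reduction to finite divisors via an affinoid exhaustion pulled back from an exhaustion of $\mathcal{T}$ by finite subtrees is correct, and the explicit rational function $f_n=\prod_{z\in\supp(D)\cap X_n}(T-z)^{D(z)}$ does realize $D|_{X_n}$ on $\Dr_{B_\p}$ (here one should note explicitly that the compensating zero or pole at $\infty$ is harmless because $\infty\in\BS(F_\p)$ lies outside $\Dr_{B_\p}$). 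Passing to $\C_p$ where $\Dr_{B_\p}\cong\Dr_p$ is also fine, since surjectivity of $\Div$ is a statement about rigid spaces over $\C_p$ and has nothing to do with $G_\p$-equivariance.

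The one place where your argument is a sketch rather than a proof is the gluing step, and you say so yourself. Two things need to be nailed down there. First, the Runge-type fact that $\mathcal{A}^\times(X_{n+1})\to\mathcal{A}^\times(X_n)$ has dense image (equivalently $H^1(\Dr_{B_\p},\mathcal{O}^\times)=0$, or the surjectivity of $\partial$ in Proposition~\ref{first} combined with Lemma~\ref{second}, which is how van der Put controls $\mathcal{A}^\times$ via harmonic cochains). Second, convergence of the corrected sequence: you must choose the correction units $w_n$ so that $\sup_{X_m}|f'_{n+1}/f'_n-1|$ is summable for every fixed $m$, which forces a geometric choice of tolerances; otherwise the ``limit'' need not exist in $\mathcal{M}^\times$. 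Neither of these is a serious obstacle — both are in Gerritzen--van der Put — but they are exactly the content of the citation the paper makes, so your proof is not genuinely independent of that input. In sum: your proposal is correct in outline, it correctly isolates the analytic crux, and it usefully explains what the paper's one-line citation is hiding, but it leans on the same literature for the real work.
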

\begin{proof}
This is Proposition 2.2 of \cite{vdP2}.
\end{proof}

Let $X\subseteq \Dr_{B_\p}$ be a subset.
We define $\mathcal{M}^{\times}\hspace{-0.2em}(X)\subseteq\mathcal{M}^{\times}$ as the subgroup of those functions whose divisor is supported on $X$.
The following claim follows immediately from the proposition above.
\begin{Cor}
The sequence
$$0\too\mathcal{A}^{\times}\too\mathcal{M}^{\times}\hspace{-0.2em}(X)\too \Div^{\dagger}(X) \too 0$$
is exact.
\end{Cor}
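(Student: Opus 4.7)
The plan is to derive this corollary directly from Proposition \ref{divseq} by restricting the short exact sequence there to divisors supported on $X$. More precisely, I regard $\Div^\dagger(X)$ as a subgroup of $\Div^\dagger(\Dr_{B_\p})$ via extension by zero: a locally finite divisor on $X$ defines a locally finite divisor on $\Dr_{B_\p}$ whose support is contained in $X$, and conversely any locally finite divisor on $\Dr_{B_\p}$ supported on $X$ restricts to a locally finite divisor on $X$. Under this identification, $\mathcal{M}^\times(X)$ is, by its very definition, the preimage of $\Div^\dagger(X)$ under the divisor map $\Div\colon\mathcal{M}^\times\to\Div^\dagger(\Dr_{B_\p})$.

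First I would check exactness on the left. An element of $\mathcal{A}^\times$ has trivial divisor, hence lies in $\mathcal{M}^\times(X)$ regardless of $X$, and the kernel of $\Div$ restricted to $\mathcal{M}^\times(X)$ equals the kernel of $\Div$ on the full $\mathcal{M}^\times$, which is $\mathcal{A}^\times$ by Proposition \ref{divseq}. Next I would check surjectivity: given $D\in\Div^\dagger(X)$, view $D$ as an element of $\Div^\dagger(\Dr_{B_\p})$; by Proposition \ref{divseq} there exists $f\in\mathcal{M}^\times$ with $\Div(f)=D$, and since $\supp(D)\subseteq X$ this $f$ lies in $\mathcal{M}^\times(X)$ by definition. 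Exactness in the middle is immediate from the fact that $\mathcal{M}^\times(X)\to\Div^\dagger(X)$ is the restriction of $\Div$, whose kernel is $\mathcal{A}^\times$.

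There is no real obstacle here; the only point that deserves a moment of care is the identification of $\Div^\dagger(X)$ with the subgroup of $\Div^\dagger(\Dr_{B_\p})$ of divisors supported on $X$, which is straightforward from the affinoid-wise definition given in Section \ref{Divisors}. Everything else is a formal diagram chase from the proposition.
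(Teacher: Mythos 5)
Your proof is correct and matches the paper's intent: the paper simply states that the corollary "follows immediately from the proposition above," and your argument spells out exactly that deduction, identifying $\mathcal{M}^\times(X)$ as the preimage of $\Div^\dagger(X)$ under $\Div$ and checking exactness termwise. No gap, same approach.
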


We fix an orientation on the edges of $\mathcal{T}$.
Note that the action of $G_\p$ preserves any such orientation.
Therefore, the surjective map
$$
\partial\colon C(\mathcal{E}_\mathcal{T},\Z)\too C(\mathcal{V}_\mathcal{T},\Z),\quad f \mapstoo [v\mapsto \sum_{s(e)=v} f(e) - \sum_{t(e)=v} f(e)]
$$
is $G_\p$-equivariant.
Its kernel consists of the so-called $\Z$-valued harmonic cochains on the tree.

In \cite{vdP}, Section 1, van der Put constructs a $G_\p$-equivariant map
$$\mathcal{A}^\times\hspace{-0.2em}/\C_p^{\times} \too C(\mathcal{E}_\mathcal{T},\Z)$$ and proves the following proposition (cf.~\cite{vdP}, Proposition 1.11. See also \cite{FvdP}, Theorem 2.7.11).
\begin{Pro}\label{first}
The sequence
$$0 \too \mathcal{A}^{\times}\hspace{-0.2em}/\hspace{0.1em}\C_p^{\times} \too C(\mathcal{E}_\mathcal{T},\Z)\xlongrightarrow{\partial} C(\mathcal{V}_\mathcal{T},\Z) \too 0$$
of $G_\p$-modules is exact.
\end{Pro}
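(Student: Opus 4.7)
The plan is to verify exactness at each of the three terms in turn, with the main difficulty lying in surjectivity of the van der Put map onto $\ker(\partial)$.

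For surjectivity of $\partial$, I would use that $\mathcal{T}$ is a tree: fix a base vertex $v_0$, and given $g\in C(\mathcal{V}_\mathcal{T},\Z)$, construct a preimage $f\in C(\mathcal{E}_\mathcal{T},\Z)$ inductively on the distance $n$ from $v_0$. At stage $n$ I would determine the values of $f$ on the edges connecting each distance-$n$ vertex $v$ to its distance-$(n+1)$ neighbours, chosen so that $\partial f(v)=g(v)$; since the value on the unique edge from $v$ back toward $v_0$ was fixed at the previous stage, this amounts at each $v$ to a single linear equation in integer unknowns, which always has a solution. Starting at $n=0$ from any solution of the equation at $v_0$ itself, one obtains $\partial f=g$ throughout $\mathcal{V}_\mathcal{T}$.

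For the vanishing $\partial\circ c=0$, I would recall that each vertex $v\in\mathcal{V}_\mathcal{T}$ corresponds, via the reduction map, to an affinoid $A_v\subseteq \Dr_{B_\p}$ whose "boundary" is the disjoint union of annuli indexed by the edges incident to $v$, and the van der Put cochain $c(f)$ records the leading exponent of $f$ on each such oriented annulus. Since $f$ is invertible analytic on $A_v$, a Laurent-expansion/residue computation on the boundary annuli shows that the signed sum of these exponents vanishes, so $c(f)$ is harmonic. Injectivity of $c$ on $\mathcal{A}^{\times}/\C_p^{\times}$ in turn reduces to the statement that a rigid analytic unit on $\Dr_{B_\p}$ whose leading exponents all vanish must be a scalar, which follows from a straightforward maximum modulus argument on each affinoid $A_v$ together with a gluing step across neighbouring affinoids.

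The main obstacle is surjectivity of $c$ onto $\ker(\partial)$: given a harmonic cochain $h$, I would exhaust $\mathcal{T}$ by an increasing sequence of finite subtrees $\mathcal{T}_n$ and, on each, build a rational function on $\BS$ whose divisor matches $h$ on the boundary edges of the associated affinoid subdomain, with harmonicity ensuring that the choices on $\mathcal{T}_n$ and $\mathcal{T}_{n+1}$ can be made compatibly. The delicate point is to normalize these local rational functions so that the resulting infinite product converges in the rigid analytic sense to an element of $\mathcal{A}^{\times}$ on all of $\Dr_{B_\p}$; this convergence requires careful non-archimedean estimates and is where the proof becomes genuinely analytic rather than purely combinatorial.
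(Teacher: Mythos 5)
The paper does not give its own proof of this proposition: it simply cites van der Put's Proposition~1.11 and Fresnel--van der Put, Theorem~2.7.11, where the result is proved. Your outline reconstructs the broad shape of that argument, so the strategy is the right one; but two of your ``straightforward'' steps are in fact the places where the real work happens in van der Put's proof. For injectivity of the van der Put map, showing that $c(f)=0$ forces $|f|$ to be constant on each affinoid (via the maximum modulus principle on the boundary annuli and gluing) is fine, but the implication ``constant modulus $\Rightarrow$ constant function'' is false in the non-archimedean world (e.g.\ $1+\pi z$ with $|\pi|<1$ has constant modulus $1$ on the unit disc and is not constant). The standard argument instead passes to the special fibre of a formal model of $\Dr_{B_\p}$, which is an infinite tree of projective lines crossing in rational double points; its only global units are the nonzero constants, and one bootstraps from there by iterating the reduction. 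This is a genuinely different mechanism from a maximum modulus argument and you should not suppress it.

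For surjectivity of $c$ onto $\ker(\partial)$, you correctly identify the exhaustion-and-convergence strategy and you are right that the convergence estimates for the infinite product over the tree are the heart of the matter --- but as written your plan has a hole exactly there: you record that ``careful non-archimedean estimates'' are needed without indicating what they are or why they close. Since the paper itself defers these very points to van der Put's published proof, your proposal is a reasonable sketch, but to count as a proof it would need to either carry out the estimates or, like the paper, honestly cite them. Your inductive argument for surjectivity of $\partial$ is fine: the tree is locally finite, so at each vertex of distance $n$ from $v_0$ the edge back towards $v_0$ is already assigned, leaving one linear equation over $\Z$ in the remaining incident-edge values, which is always solvable.
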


\begin{Rem}
The isomorphism of the space of invertible functions modulo constants with a space of harmonic cochains on the Bruhat-Tits building has recently been generalized by Gekeler to the case of Drinfeld's upper half space in higher dimension (see \cite{Gekeler}, Theorem 3.11).
\end{Rem}

Let $A$ be an abelian group equipped with the trivial action by $G_\p$.
As before, we fix an orientation on the edges of $\mathcal{T}$.
We define the $G_\p$-equivariant homomorphism
$$d\colon C(\mathcal{V}_\mathcal{T},A)\too C(\mathcal{E}_\mathcal{T},A),\quad f \mapstoo [e\mapsto f(t(e))-f(s(e))].$$
Further, we consider the embedding
$$i\colon A\too C(\mathcal{V}_\mathcal{T},A),\quad a \mapstoo [v\mapsto a].$$
The following lemma can be deduced from the fact that the Bruhat-Tits tree is contractible. 
\begin{Lem}\label{second}
The sequence
$$0 \too A \xlongrightarrow{i} C(\mathcal{V}_\mathcal{T},A)\xlongrightarrow{d} C(\mathcal{E}_\mathcal{T},A) \too 0$$
of $G_\p$-modules is exact.
\end{Lem}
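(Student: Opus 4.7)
The plan is to verify exactness pointwise, since all three modules carry the trivial-translate structure and the maps $i$ and $d$ are manifestly $G_\p$-equivariant, so the $G_\p$-module structure imposes no extra conditions.

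Injectivity of $i$ is immediate, and $d\circ i=0$ since a constant function satisfies $f(t(e))-f(s(e))=a-a=0$. For the inclusion $\ker(d)\subseteq \im(i)$, observe that if $d(f)=0$ then $f$ is constant on the connected components of $\mathcal{T}$; since the Bruhat--Tits tree is connected, $f$ must be globally constant, hence lies in $\im(i)$.

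The only non-trivial step is surjectivity of $d$. Fix a base vertex $v_{0}\in\mathcal{V}_{\mathcal{T}}$. Given $g\in C(\mathcal{E}_{\mathcal{T}},A)$, define $f(v_{0})=0$ and, for any other vertex $v$, let $v_{0}=w_{0},w_{1},\dots,w_{n}=v$ denote the unique non-backtracking path in $\mathcal{T}$ from $v_{0}$ to $v$, with $e_{i}$ the edge joining $w_{i-1}$ to $w_{i}$. Set
$$f(v)=\sum_{i=1}^{n}\varepsilon_{i}\,g(e_{i}),\qquad \varepsilon_{i}=\begin{cases}+1 & \text{if } s(e_{i})=w_{i-1},\ t(e_{i})=w_{i},\\ -1 & \text{otherwise.}\end{cases}$$
The existence and uniqueness of such a path is precisely the statement that $\mathcal{T}$ is a tree, i.e.\ connected and simply connected; this is where contractibility enters. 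A direct computation, using that appending the edge $e$ to the path from $v_{0}$ to $s(e)$ gives the path from $v_{0}$ to $t(e)$ (after possibly removing a backtrack, which however contributes zero to the signed sum), yields $f(t(e))-f(s(e))=g(e)$ for every edge $e$, proving $d(f)=g$.

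The only potential obstacle is the well-definedness of $f(v)$, which could fail if two different paths from $v_{0}$ to $v$ produced different sums; but in a tree there is no such ambiguity, so this difficulty does not arise.
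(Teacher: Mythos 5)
Your proof is correct and fills in exactly what the paper leaves as a one-line remark: the displayed sequence is the augmented cellular cochain complex of $\mathcal{T}$ with coefficients in $A$, and its exactness is the vanishing of reduced cohomology of $\mathcal{T}$, which you verify directly from connectedness (giving $\ker d = \im i$) and the uniqueness of non-backtracking paths in a tree (giving surjectivity of $d$). This is the standard way to deduce the lemma from contractibility, so your approach matches the paper's.
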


\begin{Rem}
Let $v$ and $\bar{v}$ be two adjacent vertices of $\mathcal{T}$ with connecting edge $e$.
For every abelian group there are $G_\p$-equivariant isomorphisms
\begin{align*}
C(\mathcal{E}_\mathcal{T},A)&\cong \Coind_{K_{\p,e}}^{G_\p} A\\
\intertext{and}
C(\mathcal{V}_\mathcal{T},A)&\cong \Coind_{K_{\p,v}}^{G_\p} A\ \oplus\ \Coind_{K_{\p,\bar{v}}}^{G_\p} A.
\end{align*}
\end{Rem}

\section{Rigid meromorphic cocyles}
\subsection{Divisor valued cohomology classes}\label{Cohomology}
We define $\Delta_0$ as the kernel of the map $$\Z[\PP(F)] \to \Z,\quad \sum_P m_P P \mapsto \sum_P m_P.$$
The $\SL_2(F)$-action on $\PP(F)$ induces an action on $\Delta_0$.
Let $\Gamma\subseteq G(F)$ be an arbitrary subgroup and $A$ a $\Z[\Gamma]$-module.
If $G$ is split, we put
$$\HH^{i}_c(\Gamma,A)=\HH^{i-1}(\Gamma,\Hom_\Z(\Delta_0,A)).$$
If $G$ is non-split, we simply set $\HH^{i}_c(\Gamma,A)=\HH^{i}(\Gamma,A)$.

Let us fix a $\p$-arithmetic congruence subgroup $\Gamma^{\p}\subseteq G(F)$ and a $\Gamma^{\p}$-stable subset $X\subseteq \Dr_{B_\p}$.
For every $\Gamma^\p$-orbit $\Gamma^{\p}x\subseteq X$ the inclusion $\Div^{\dagger}(\Gamma^{\p}x)\into \Div^{\dagger}(X)$ induces a map
$$\HH^{i}_c(\Gamma^{\p},\Div^{\dagger}(\Gamma^{\p}x))\too \HH^{i}_c(\Gamma^{\p},\Div^{\dagger}(\Dr_{B_\p}))$$
in cohomology.
\begin{Pro}\label{new}
The canonical map
\begin{equation}\label{decom}
\bigoplus_{\Gamma^{\p}x \in \Gamma^{\p}\backslash X} \HH^{i}_c(\Gamma^{\p},\Div^{\dagger}(\Gamma^{\p}x)) \too \HH^{i}_c(\Gamma^{\p},\Div^{\dagger}(X)) \tag{+}.
\end{equation}
is an isomorphism.
\end{Pro}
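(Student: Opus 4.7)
My plan is to decompose both sides of (+) as direct sums indexed by cells of the Bruhat--Tits tree $\mathcal{T}$, combining the coinduction decomposition of Lemma~\ref{coind}, Shapiro's lemma, and the partition of $X$ under the reduction map.

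First, I would set $X_w = X \cap \red^{-1}(w)$ for each cell $w \in \mathcal{V}_\mathcal{T} \cup \mathcal{E}_\mathcal{T}$ and let $\Gamma^\p_w \subseteq \Gamma^\p$ denote its stabilizer, which is an arithmetic (not $\p$-arithmetic) subgroup of $G(F)$. Mackey-decomposing the isomorphism of Lemma~\ref{coind} as a $\Gamma^\p$-module, and using that the double coset spaces $\Gamma^\p \backslash G_\p / K_{\p,v}$ are finite (a class number statement furnished by strong approximation), one obtains a finite direct sum decomposition
$$\Div^\dagger(X) \;\cong\; \bigoplus_{[w] \in \Gamma^\p \backslash \mathcal{V}_\mathcal{T}} \Coind_{\Gamma^\p_w}^{\Gamma^\p} \Div(X_w) \;\oplus\; \bigoplus_{[\epsilon] \in \Gamma^\p \backslash \mathcal{E}_\mathcal{T}} \Coind_{\Gamma^\p_\epsilon}^{\Gamma^\p} \Div(X_\epsilon).$$
Applied to a single orbit $X = \Gamma^\p x$, only one summand contributes, namely $\Div^\dagger(\Gamma^\p x) \cong \Coind_{\Gamma^\p_{w_0}}^{\Gamma^\p} \Div(\Gamma^\p_{w_0} \cdot x)$, where $w_0 = \red(x)$.

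Next, exploiting $\Gamma^\p$-equivariance of $\red$, I would check that the natural map $\Gamma^\p_w \backslash X_w \to \Gamma^\p \backslash X$ sending $\Gamma^\p_w y \mapsto \Gamma^\p y$ is injective, and that collectively these provide a bijection between $\Gamma^\p \backslash X$ and the disjoint union of the $\Gamma^\p_w \backslash X_w$ over all cell classes. Since $\Div$ consists of finitely supported divisors, this simultaneously yields a $\Gamma^\p_w$-equivariant decomposition $\Div(X_w) = \bigoplus_{[\Gamma^\p_w y]} \Div(\Gamma^\p_w \cdot y)$. Combining Shapiro's lemma for $\HH^i_c$ (valid because $\Coind$ commutes with $\Hom_\Z(\Delta_0,-)$ by tensor--hom adjunction) with the commutation of $\HH^i_c(\Gamma^\p_w, -)$ with direct sums, both sides of (+) get identified with the common double direct sum $\bigoplus_{[w]} \bigoplus_{[\Gamma^\p_w y]} \HH^i_c(\Gamma^\p_w, \Div(\Gamma^\p_w \cdot y))$ plus the analogous edge contributions, and the map (+) becomes the identity under these identifications.

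The main technical obstacle will be the commutation of $\HH^i_c$ with arbitrary direct sums. In the non-split case $\HH^i_c = \HH^i$ and this reduces to arithmetic groups being of type $FP_\infty$ (Borel--Serre). The split case, where $\HH^i_c(\Gamma, A) = \HH^{i-1}(\Gamma, \Hom_\Z(\Delta_0, A))$ and $\Hom_\Z(\Delta_0, -)$ does not preserve direct sums, is more delicate; I would handle it via the short exact sequence $0 \to \Delta_0 \to \Z[\PP(F)] \to \Z \to 0$, applying $\Hom_\Z(-, \bigoplus A_\alpha)$ and using that $\Gamma^\p_w$ acts on $\PP(F)$ with finitely many orbits (so Shapiro applies to $\Hom_\Z(\Z[\PP(F)], -)$), after which a five-lemma argument delivers the required commutation.
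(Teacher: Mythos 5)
Your proof is essentially the paper's: partition $X$ by $\Gamma^\p$-orbits of tree cells, recognize each piece of $\Div^\dagger(X)$ as coinduced from an arithmetic cell-stabilizer, apply Shapiro's lemma, and commute $\HH^i_c$ of the arithmetic stabilizer with the orbit-indexed direct sum using type \emph{(VFL)}. Your explicit five-lemma argument for why $\HH^i_c$ commutes with direct sums in the split case --- where the definition involves $\Hom_\Z(\Delta_0,-)$, which does not itself preserve direct sums --- spells out a step the paper passes over with a bare citation to Serre; it is a worthwhile expansion, not a different route.
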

\begin{proof}
The action of $\Gamma_\p$ on $\mathcal{V}\cup \mathcal{E}$ has only finitely many orbits.
We denote these by $o_1,\ldots,o_h$ and put $X_j=X\cap\red^{-1}(o_j)$ for $j\in\{1,\ldots,h\}$.
It is enough to prove that the canonical map
$$\bigoplus_{\Gamma^{\p}x \in \Gamma^{\p}\backslash X_j} \HH^{i}_c(\Gamma^{\p},\Div^{\dagger}(\Gamma^{\p}x))
\too \HH^{i}_c(\Gamma^{\p},\Div^{\dagger}(X_j))$$
is an isomorphism for all $j\in\{1,\ldots,h\}$.

Let $v_j$ be an element of $o_j$ (note that $v_j$ is either a vertex or an edge).
We write $\Gamma_{v_j}$ for the stabilizer of $v_j$ in $\Gamma^\p$ and put $X_{v_j}=X\cap \red^{-1}(v_j).$
As in the proof of Lemma \ref{coind} one can prove that there exists an isomorphism
$$\Coind_{\Gamma_{v_j}}^{\Gamma^\p}\Div(X_{v_j})\xlongrightarrow{\cong} \Div^{\dagger}(X_j).$$
of $\Gamma^\p$-modules.
Therefore, Shapiro's Lemma yields an isomorphism
$$\HH^{i}_c(\Gamma^{\p},\Div^{\dagger}(X_j))\xlongrightarrow{\cong} \HH^{i}_c(\Gamma_{v_j},\Div(X_{v_j})).$$
As a $\Gamma_{v_j}$-module the space of divisors $\Div(X_{v_j})$ decomposes as follows
$$
\Div(X_{v_j})=\bigoplus_{\Gamma_{v_j} x \in \Gamma_{v_j}\hspace{-0.2em}\backslash X_{v_j}} \Div(\Gamma_{v_j} x).
$$
Since arithmetic groups are of type \emph{(VFL)} by a theorem of Borel and Serre (see \cite{BS}, Section 11.1), the functor $N \mapsto \HH^{i}_c(\Gamma_{v_j},N)$ commutes with direct limits (cf.~\cite{Se2}, p.~101).
Thus, the canonical map
$$
\bigoplus_{\Gamma_{v_j} x \in \Gamma_{v_j}\hspace{-0.2em}\backslash X_{v_j}} \HH^{i}_c(\Gamma_{v_j},\Div(\Gamma_{v_j} x))\xlongrightarrow{\cong}\HH^{i}_c(\Gamma_{v_j},\Div(X_{v_j}))$$
is an isomorphism.

For every $\Gamma^\p$-orbit in $X_j$ we may choose a representative $x\in X_{v_j}$.
As above Shapiro’s Lemma yields a canonical isomorphism
$$\bigoplus_{\Gamma^{\p}x \in \Gamma^{\p}\backslash X_j} \HH^{i}_c(\Gamma^{\p},\Div^{\dagger}(\Gamma^{\p}x))
\xlongrightarrow{\cong}
\bigoplus_{\Gamma_{v_j} x \in \Gamma_{v_j}\hspace{-0.2em}\backslash X_{v_j}} \HH^{i}_c(\Gamma_{v_j},\Div(\Gamma_{v_j} x)),$$
which proves the claim.
\end{proof}

\begin{Def}
Let $D\in \HH^{i}_c(\Gamma^{\p},\Div^{\dagger}(\Dr_{B_\p}))$ be a divisor-valued cohomology class.
Its \emph{support} $\supp(D)$ is the union of all $\Gamma^{\p}$-orbits $\Gamma^{\p}x\subseteq \Dr_{B_\p}$ such that the restriction of $D$ to $\HH^{i}_c(\Gamma^{\p},\Div^{\dagger}(\Gamma^{\p}x))$ with respect to the decomposition \eqref{decom} is non-zero.
\end{Def}
Note that from the decomposition \eqref{decom} we deduce that the support of a divisor-valued cohomology class is always a finite union of $\Gamma^{\p}$-orbits.

Given a subset $X\subseteq \Dr_{B_\p}$ we define 
$$X^{\qd}=\left\{x\in X\mid x\in \BS(F(x))\ \mbox{for some quadratic extension}\ F(x)/F \right\}.$$

The following lemma is probably a well-known statement.
But since we could not find it in the literature we included a proof.
\begin{Lem}\label{groups}
An element $x\in \Dr_{B_\p}$ belongs to $\Dr_{B_\p}^{\qd}$ if and only if the stabilizer $G(F)_x$ of $x$ in $G(F)$
has a non-torsion element.
In that case we have: 
\begin{enumerate}[(i)]
\item $F(x)$ is a splitting field of $B$.
\item There exists an embedding $F(x)\into B$, which induces an isomorphism between the group of norm one elements of $F(x)$ and $G(F)_x$.
\item The prime $\p$ is non-split in $F(x)$.
\end{enumerate}
\end{Lem}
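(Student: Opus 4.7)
The plan is to treat both implications together by analysing the $F$-algebraic structure of the stabilizer of the Galois-conjugate pair $\{x,\bar{x}\}\subset\BS$. In both directions, $x\in\Dr_{B_\p}$ implies $x\notin\BS(F_\p)\supseteq\BS(F)$, so whenever $F(x)/F$ is a quadratic field with nontrivial Galois element $\sigma$, the conjugate $\bar{x}:=\sigma(x)$ is distinct from $x$. Property (iii) is then immediate and uniform: if $\p$ split in $F(x)$, then $F(x)\otimes_F F_\p\cong F_\p\times F_\p$, and any embedding $F(x)\hookrightarrow\C_p$ would factor through one of the projections to $F_\p$, forcing $x\in\BS(F(x))\subseteq\BS(F_\p)$, contradiction.

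For the direction ``non-torsion stabilizer $\Rightarrow x\in\Dr_{B_\p}^{\qd}$'', I take a non-torsion $g\in G(F)_x$ and consider $E:=F[g]\subseteq B$. The reduced characteristic polynomial $T^2-\tr(g)T+1$ forces $\dim_F E\le 2$, and non-torsion rules out $E=F$. If $E\cong F\times F$, then $g$ would be $F$-diagonalizable with all fixed points on $\BS(\bar{F})$ lying in $\BS(F)\subseteq\BS(F_\p)$, contradicting $x\in\Dr_{B_\p}$. Hence $E$ is a quadratic subfield of $B$, yielding (i) once we identify $E$ with $F(x)$. Under $B\otimes_F E\cong M_2(E)$ the eigenvectors of $g$ lie in $\mathbb{P}^1(E)\cong\BS(E)$, so $x\in\BS(E)$; combined with $F(x)\neq F$ this gives $F(x)=E$.

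For the converse and for (ii), I use that $\{x,\bar{x}\}$ is a degree-$2$ étale $F$-subscheme of $\BS$ whose pointwise stabilizer $T\subseteq G$ is an $F$-algebraic subgroup. After base change to $F(x)$ (which splits $B$ by the hypothesis $x\in\BS(F(x))$, giving (i)) the group $T_{F(x)}$ is the common stabilizer in $\SL_{2,F(x)}$ of two distinct points of $\mathbb{P}^1_{F(x)}$, i.e.\ a maximal torus. Since $\sigma$ swaps the two fixed points, it acts by $-1$ on the cocharacter lattice, identifying $T\cong R^1_{F(x)/F}\mathbb{G}_m$ as $F$-tori, and in particular $T(F)=F(x)^{\times,N=1}$. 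The $F$-subalgebra of $B$ generated by $T(F)$ is $2$-dimensional and commutative, hence a quadratic subfield of $B$ isomorphic to $F(x)$ via the minimal polynomial of any non-torsion generator, furnishing the embedding in (ii). The identification $G(F)_x=T(F)$ is now a two-line argument: $T(F)\subseteq G(F)_x$ is clear, and conversely any $h\in G(F)_x$ is Galois-invariant, hence also fixes $\bar{x}=\sigma(x)$ and so lies in $T(F)$. Finally, Hilbert 90 gives $F(x)^{\times,N=1}\cong F(x)^\times/F^\times$, which has only finite torsion (the norm-one roots of unity) while being infinite, so $G(F)_x$ contains non-torsion elements, completing the forward implication.

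The subtlest step is pinning down the $F$-structure of $T$ as $R^1_{F(x)/F}\mathbb{G}_m$, but this reduces to the standard calculation of the Galois action on the cocharacter lattice of a maximal torus in $\SL_2$ fixing a pair of Galois-conjugate points.
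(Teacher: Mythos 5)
Your proof is correct, and it takes a genuinely different route from the paper's. The paper's forward direction proceeds by a dimension count: it forms the Weil restriction of $\mathbb{P}_{B,F(x)}$ to $F$, obtaining a two-dimensional $F$-variety on which the three-dimensional group $G$ acts, so the stabilizer $G_x$ is (as an $F$-algebraic group) at least one-dimensional; it then argues that since $x\notin\mathbb{P}_B(F)$ this stabilizer must be a non-split torus splitting over $F(x)$, hence the norm-one torus. You instead work directly with the pointwise stabilizer $T$ of the Galois-conjugate pair $\{x,\bar{x}\}$, recognize $T_{F(x)}$ as the intersection of two distinct Borels in $\SL_{2,F(x)}$ (so visibly a maximal torus), and pin down the $F$-form by computing the $\Gal(F(x)/F)$-action on the cocharacter lattice. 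Both approaches land on $T\cong R^1_{F(x)/F}\mathbb{G}_m$, but yours avoids the Weil-restriction dimension count and makes the identification of $G(F)_x$ with $T(F)$ a one-line Galois descent argument ($h\in G(F)_x$ is $\sigma$-invariant, hence also fixes $\bar{x}$), whereas the paper leaves that identification implicit. Your proof of (iii) -- that an embedding $F(x)\into\C_p$ would factor through a projection $F(x)\otimes_F F_\p\to F_\p$ if $\p$ split -- is also more explicit than the paper's parenthetical assertion. The converse direction (non-torsion stabilizer implies quadratic point) is essentially the same in both: pass to $E=F[g]$ using the reduced characteristic polynomial, rule out $E=F$ and $E\cong F\times F$, and observe the eigenvectors of $g$ live in $\BS(E)$. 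One small point worth making explicit in your argument that $F[g]\cong F(x)$ in part (ii): the isomorphism $T\cong R^1_{F(x)/F}\mathbb{G}_m$ carries a non-torsion $g\in T(F)$ to a norm-one element $a\in F(x)^\times$ which is not a root of unity and hence not in $F$, and the reduced characteristic polynomial of $g$ in $B$ equals the minimal polynomial of $a$ over $F$ (both have the same trace and norm), which gives the isomorphism of quadratic fields you need.
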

\begin{proof}
If $x$ belongs to $\Dr_{B_\p}^{\qd}$, then by definition $F(x)$ is a splitting field of $B$ (and $\p$ is non-split in $F(x)$).
The Weil restriction of $\mathbb{P}_{B,F(x)}$ from $F(x)$ to $F$ is a two-dimensional $F$-variety, on which the three-dimensional $F$-algebraic group $G$ acts algebraically.
(Note that $\mathbb{P}_{B,F(x)}$ is non-canonically isomorphic to $\mathbb{P}^{1}_{F(x)}$.)
Therefore, the stabilizer $G_x$ of $x$ viewed as an $F$-algebraic group is at least one-dimensional.
Since by definition $x$ is not an element of $\mathbb{P}_{B}(F)$, one deduces that $G_x$ has to be a non-split torus, which splits after base change to $F(x)$.
Thus, it is isomorphic to the group of norm one elements in $F(x)$ and, hence, the group $G(F)_x$ has a non-torsion point.

Conversely, let us assume that $G(F)_x$ has a non-torsion element $\gamma$.
By construction $\gamma$ is neither unipotent nor an element of an $F$-split torus of $G$.
Thus, the quadratic extension $F(\gamma)\subseteq B$ generated by $\gamma$ is a splitting field of $B$.
Hence, $\gamma$ is a non-torsion element inside a split torus inside $G_{F(\gamma)}\cong \SL_{2,F(\gamma)}.$
It is easy to see (for example by conjugating to the diagonal torus) that for any field extension $E/F(x)$ the element $\gamma$ has exactly two fixed points on $\mathbb{P}_{B}(E)\cong\mathbb{P}^{1}_{F(\gamma)}(E)$ both of which are defined over $F(\gamma)$.
\end{proof}

Let $X$ be a subset of $\Dr_{B_\p}$ and $x$ an element of $X^{\qd}$.
We write $S_{\infty,sp}(x)$ for the set of Archimedean places of $F$ which split in $F(x)$.
Since $F(x)$ can be embedded into $B$, it follows that $S_{\infty,sp}(x)$ is a subset of those Archimedean places at which $B$ is split.
In particular, we have $|S_{\infty,sp} (x)| \leq r= r_1(B) + r_2.$
For integers $s\geq s^{\prime}\geq 1$ we define
\begin{align*}
X^{\qd,s}&=\left\{x\in X^{\qd}\mid |S_{\infty,sp} (x)|=s \right\}\\
\intertext{and}
X^{\qd,[s^{\prime},s]}&=\left\{x\in X^{\qd}\mid |S_{\infty,sp} (x)|\in [s^{\prime},s] \right\}.
\end{align*}

\begin{Pro}\label{Theorem}
Let $\Gamma^{\p}\subseteq G(F)$ be a $\p$-arithmetic congruence subgroup.
\begin{enumerate}[(i)]
\item\label{T1} Let $s\geq 1$ be an integer and $D\in \HH^{d-s}_c(\Gamma^{\p},\Div^{\dagger}(\Dr_{B_\p}))$ a divisor-valued cohomology class.
We have
$$\supp(D)\subseteq \Dr_{B_\p}^{\qd,[s,r]}.$$
In particular,
$\HH^{i}_c(\Gamma^{\p},\Div^{\dagger}(\Dr_{B_\p}))= 0$
for $i< d-r=r_1(B)+2\cdot r_2$.
\item\label{T2} We have
$$\HH^{d-r}_c(\Gamma^{\p},\Div^{\dagger}(\Dr_{B_\p}))\cong \bigoplus_{\Gamma^{\p}\backslash \Dr_{B_\p}^{\qd,r}} \Z.$$
\item\label{T3} In the range $d-r\leq i \leq d-1$ the torsion-free part of
$\HH^{i}_c(\Gamma^{\p},\Div^{\dagger}(\Dr_{B_\p}))$
is free of countably infinite rank.
\end{enumerate}
\end{Pro}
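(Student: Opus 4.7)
The plan is to apply Proposition \ref{new} to decompose $\HH^{i}_c(\Gamma^\p, \Div^{\dagger}(\Dr_{B_\p}))$ as a direct sum indexed by $\Gamma^\p$-orbits in $\Dr_{B_\p}$ and to analyse each summand separately. Fix an orbit $\Gamma^\p x$ and choose a vertex $v = \red(x)\in\mathcal{V}_\mathcal{T}$ (the case when $x$ reduces to the interior of an edge is analogous). Since the reduction map is $G_\p$-equivariant, any element of $\Gamma^\p$ fixing $x$ also fixes $v$; hence $\Gamma^\p_x\subseteq \Gamma_v := \Gamma^\p\cap K_{\p,v}$, and the latter is an $F$-arithmetic congruence subgroup of $G$. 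Arguing exactly as in the proof of Proposition \ref{new} (via Lemma \ref{coind} and Shapiro's lemma), the orbit contribution becomes
\[
\HH^{i}_c(\Gamma^\p,\Div^{\dagger}(\Gamma^\p x))\ \cong\ \HH^{i}_c\bigl(\Gamma_v,\ \Ind_{\Gamma^\p_x}^{\Gamma_v}\Z\bigr),
\]
where the right-hand side uses induction (not coinduction) because $\Div(\red^{-1}(v)\cap\Gamma^\p x)$ consists of \emph{finitely} supported divisors on the $\Gamma_v$-orbit of $x$.

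The main step is to invoke Bieri-Eckmann duality for the $F$-arithmetic group $\Gamma_v$. In the non-split case $\Gamma_v$ is a cocompact lattice in the semisimple Lie group $G(F\otimes_\Q\R)$, whose symmetric space has real dimension $d$, and is therefore a virtual Poincar\'e duality group of dimension $d$. In the split case $G\simeq\SL_2$, the group $\Gamma_v$ has $\Q$-rank $r$ and virtual cohomological dimension $d-r$, and the role of the Steinberg dualizing module is taken precisely by the shift $\HH^{i}_c(\Gamma_v,-)=\HH^{i-1}(\Gamma_v,\Hom(\Delta_0,-))$ built into the definition. In both cases, combining the duality with Shapiro's lemma in homology gives the uniform identification
\[
\HH^{i}_c\bigl(\Gamma_v,\ \Ind_{\Gamma^\p_x}^{\Gamma_v}\Z\bigr)\ \cong\ \HH_{d-i}\bigl(\Gamma^\p_x,\Z\bigr).
\]
Verifying this duality isomorphism in the split case, in particular checking the compatibility of $\Hom(\Delta_0,-)$ with the induction, is the chief technical obstacle.

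Finally, $\HH_{d-i}(\Gamma^\p_x,\Z)$ is easy to compute. By Lemma \ref{groups} either $\Gamma^\p_x$ is torsion (if $x\notin\Dr_{B_\p}^{\qd}$), in which case $\HH_j(\Gamma^\p_x,\Z)=0$ for $j\geq 1$; or $x\in\Dr_{B_\p}^{\qd}$ and $\Gamma^\p_x$ is a $\p$-arithmetic subgroup of the norm-one torus of $F(x)/F$. Dirichlet's $S$-unit theorem identifies the free rank of this torus unit group with the number of places in $S=\{\text{Archimedean}\}\cup\{\p\}$ that split in $F(x)$; since by Lemma \ref{groups} the prime $\p$ is non-split in $F(x)$, this rank equals $s=|S_{\infty,sp}(x)|$. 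Thus $\HH_j(\Gamma^\p_x,\Z)$ is free of rank $\binom{s}{j}$ for $0\le j\le s$ and vanishes for $j>s$. Substituting $j=d-i$: for $i=d-s_0$ the contribution of a type-$s$ orbit is non-zero exactly when $s\ge s_0$, which yields (\ref{T1}) together with the vanishing for $i<d-r$; at $i=d-r$ only type-$r$ orbits contribute and each gives a single copy of $\Z$, yielding (\ref{T2}); for $d-r\le i\le d-1$ one obtains $\binom{s}{d-i}$ free summands per orbit of type $s\ge d-i$, and the existence of infinitely many distinct $\Gamma^\p$-orbits of quadratic points with any prescribed splitting behaviour at the Archimedean places and at $\p$ (by weak approximation and Chebotarev) delivers the countably infinite rank required for (\ref{T3}).
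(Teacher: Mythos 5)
Your proposal follows the same overall strategy as the paper: decompose via Proposition \ref{new}, reduce to arithmetic stabilizers $\Gamma_v$ by Shapiro's lemma, invoke Bieri--Eckmann duality to convert cohomology to homology, and finish by computing the homology of the (virtually free-abelian) stabilizer groups $\Gamma^\p_x$ via Dirichlet. The rank count is also correct: $\Gamma^\p_x$, once it fixes the vertex $v=\red(x)$, lies inside the arithmetic group $\Gamma_v$, and its free rank is $|S_{\infty,sp}(x)|$; using the ordinary unit theorem is cleaner here than the $S$-unit theorem you cite (since $\p$ is non-split the two give the same number, but invoking the $S$-unit theorem is misleading --- $\Gamma^\p_x$ is genuinely arithmetic, not merely $\p$-arithmetic).

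There is, however, a genuine gap in your argument, and it concerns torsion. You write that if $x\notin\Dr_{B_\p}^{\qd}$ then $\Gamma^\p_x$ is a torsion group and hence $\HH_j(\Gamma^\p_x,\Z)=0$ for $j\geq 1$. This is false: a nontrivial finite group has nonvanishing higher homology (e.g.~$\HH_j(\Z/2,\Z)=\Z/2$ for all odd $j$). More fundamentally, you apply Bieri--Eckmann duality directly to $\Gamma_v$, but $\Gamma_v$ need not be torsion-free, and duality groups must be torsion-free. The paper avoids both problems by first passing to a torsion-free normal subgroup $\Gamma'\subseteq\Gamma^\p$ of finite index and running a Hochschild--Serre spectral sequence. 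In the torsion-free situation $\Gamma'\cap G(F)_x$ is either trivial (when $G(F)_x$ is torsion, i.e.~$x\notin\Dr^{\qd}$) or free abelian of the expected rank, so the homology computation is clean, and the spectral sequence then transfers the vanishing to $\Gamma^\p$. You need to insert this reduction (or some other device to handle torsion) before the duality step; otherwise parts (i) and (ii), which assert vanishing of the full cohomology groups and not merely their torsion-free quotients, are not established. The duality identification you flag as the ``chief technical obstacle'' is indeed the crux and is handled in the paper by citing Borel--Serre (the group $\Gamma'$ is a Bieri--Eckmann duality group of dimension $d$ in the non-split case and dimension $d-1$ with dualizing module $\Delta_0$ in the split case), which after unwinding the definition of $\HH_c$ gives the uniform isomorphism $\HH^{d-i}_c(\Gamma',M)\cong\HH_i(\Gamma',M)$.
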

\begin{proof}
The group $\Gamma^{\p}$ has a torsion-free, normal subgroup of finite index.
By applying the Hochschild-Serre spectral sequence all claims can be reduced to the case of a torsion-free group, which is the content of the next lemma.
\end{proof}

\begin{Lem}
Assume $\Gamma^{\p}$ is torsion-free. The natural map
$$\HH_c^{d-s}(\Gamma^{\p}, \Div^{\dagger}(\Dr_{B_\p}^{\qd,[s,r]}))\too \HH_c^{d-s}(\Gamma^{\p}, \Div^{\dagger}(\Dr_{B_\p}))$$
is an isomorphism for every $s\geq 1$.
Moreover, there exists an up to sign canonical isomorphism 
$$\HH_c^{d-s}(\Gamma^{\p}, \Div^{\dagger}(\Gamma^{\p}x))= \Lambda^{s}(\Z^{t})$$
for every $x \in X^{\qd,t}$, $1\leq t\leq r$.
\end{Lem}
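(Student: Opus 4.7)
My plan is to prove the second, explicit identification first and then deduce the isomorphism statement from it via a long exact sequence. By the coinduction-and-Shapiro argument appearing in the proof of Proposition~\ref{new}, the computation of $\HH_c^{d-s}(\Gamma^{\p},\Div^{\dagger}(\Gamma^{\p}x))$ reduces to
$$\HH_c^{d-s}\bigl(\Gamma_{v_j},\Z[\Gamma_{v_j}/H]\bigr),$$
where $v_j=\red(x)\in \mathcal{V}\cup\mathcal{E}$ and $H$ is the stabilizer of $x$ in $\Gamma_{v_j}$. Since $T(F)$ already fixes $\red(x)$, one has $H=\Gamma^{\p}\cap T(F)$, where $T$ is the one-dimensional anisotropic torus arising from the embedding $F(x)\hookrightarrow B$ supplied by Lemma~\ref{groups}(ii). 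Applying Dirichlet's unit theorem to the norm sequence
$$1\to T\to R_{F(x)/F}\mathbb{G}_m\xrightarrow{N}\mathbb{G}_m\to 1$$
and noting that by Lemma~\ref{groups}(iii) the place $\p$ contributes no rank, one finds $H$ to be finitely generated of rank $t$; torsion-freeness of $\Gamma^{\p}$ then yields $H\cong\Z^t$.

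The key step is Bieri--Eckmann duality applied to the arithmetic (not $\p$-arithmetic) group $\Gamma_{v_j}$. In the guise of Poincar\'e--Lefschetz duality on the Borel--Serre compactification of $\Gamma_{v_j}\backslash Y$, where $Y$ is the orientable symmetric space attached to $G(F\otimes_{\Q}\R)$ (of real dimension $d$), one obtains a canonical isomorphism, up to the sign inherent in an orientation choice on $Y$,
$$\HH_c^{d-s}(\Gamma_{v_j},M)\cong \HH_s(\Gamma_{v_j},M)$$
for every coefficient module $M$. Specialising to $M=\Z[\Gamma_{v_j}/H]$ and combining with Shapiro's lemma for group homology and the standard identification $\HH_\ast(\Z^t,\Z)\cong \Lambda^\ast(\Z^t)$ gives
$$\HH_c^{d-s}\bigl(\Gamma_{v_j},\Z[\Gamma_{v_j}/H]\bigr)\cong \HH_s(H,\Z)\cong \Lambda^s(\Z^t),$$
which establishes the second claim (and shows vanishing whenever $s>t$).

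For the first statement, apply $\HH_c^\ast(\Gamma^{\p},-)$ to the extension-by-zero short exact sequence
$$0\to \Div^{\dagger}(\Dr_{B_\p}^{\qd,[s,r]})\to \Div^{\dagger}(\Dr_{B_\p})\to \Div^{\dagger}(\Dr_{B_\p}\smallsetminus \Dr_{B_\p}^{\qd,[s,r]})\to 0.$$
In the resulting long exact sequence it suffices to show that $\HH_c^{d-s-1}$ and $\HH_c^{d-s}$ of the rightmost term vanish. Proposition~\ref{new} decomposes this term into orbit summands, each of which is either non-quadratic (so with trivial stabilizer, by torsion-freeness and Lemma~\ref{groups}) or quadratic with $t\leq s-1$. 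The already proven identification evaluates the two relevant summands as $\Lambda^{s+1}(\Z^{t})$ and $\Lambda^{s}(\Z^{t})$, both zero because $t<s$.

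The principal obstacle is setting up Bieri--Eckmann duality correctly in the split case, where the definition of $\HH_c^\ast$ via the augmentation module $\Delta_0$ must be reconciled with the cohomology of the Borel--Serre boundary; once this is in place, the remaining ingredients are a formal application of Shapiro's lemma and the standard Koszul computation of the homology of a free abelian group.
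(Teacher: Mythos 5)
Your proof is correct and uses the same core ingredients as the paper's: reduction to an arithmetic stabilizer $\Gamma_{v_j}$ via the coinduction/Shapiro mechanism of Proposition~\ref{new}, Bieri--Eckmann duality $\HH_c^{d-i}(\Gamma_{v_j},M)\cong\HH_i(\Gamma_{v_j},M)$, Shapiro in homology, and the Koszul computation $\HH_\ast(\Z^t,\Z)\cong\Lambda^\ast(\Z^t)$, with the rank $t$ coming from Dirichlet's unit theorem applied to the norm-one torus of $F(x)/F$. The only cosmetic difference is that you derive the first claim from the long exact sequence of the extension-by-zero sequence (checking vanishing in two consecutive degrees on the complement), whereas the paper reads it off directly from the orbit-wise direct-sum decomposition of Proposition~\ref{new}, and the paper splits the two reductions into two separate lemmas; substantively the arguments coincide.
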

\begin{proof}
All claims are vacuous if $d=0$.
Thus, we may assume that $d\geq 1$.
We want to reduce the statement to one on arithmetic subgroups as in the proof of Proposition \ref{new}.
Note that in this case the orbits of the action of $\Gamma^\p$ on the Bruhat--Tits tree are easy to describe:
the group $G$ is $F$-simple and simply-connected.
Since $G(F\otimes_\Q \R)$ is not compact, strong approximation implies that $\Gamma^{\p}$ is dense in $G_\p$ (see for example \cite{PRR}, Theorem 7.12).
So for every vertex $v$ of $\mathcal{T}$ we have an isomorphism
$$\Coind_{K_{\p,v}}^{G_\p}\Div(\red^{-1}(v))\cong \Coind_{\Gamma_v}^{\Gamma^{\p}}\Div(\red^{-1}(v)),$$
where $\Gamma_v$ is the intersection of $\Gamma^{\p}$ with $K_{\p,v}$.
An analogous statement holds if one replaces the vertex $v$ by an edge $e$ of $\mathcal{T}$.
By Shapiro's Lemma for cohomology combined with Lemma \ref{coind} the claim can be deduced from the lemma below.
\end{proof}

\begin{Lem}
Let $\Gamma\subseteq G(F)$ be a torsion-free arithmetic congruence subgroup and $X\subseteq \Dr_p$ a $\Gamma$-stable subset.
The natural map
$$\HH_c^{d-s}(\Gamma, \Div(X^{\qd,[s,r]}))\too \HH_c^{d-s}(\Gamma, \Div(X))$$
is an isomorphism for every $s\geq 1$.
Moreover, there exists an up to sign canonical isomorphism
$$\HH_c^{d-s}(\Gamma, \Div(X^{\qd,t}))=\bigoplus_{\Gamma\backslash X^{\qd,t}} \Lambda^{s}(\Z^{t})$$
for every $1\leq t\leq r$.
\end{Lem}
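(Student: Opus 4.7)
My approach combines Borel--Serre duality with Shapiro's lemma and the stabilizer description given by Lemma \ref{groups}. The idea is to translate the compactly supported cohomology into homology (where direct-sum decompositions behave well), decompose the divisor module orbitwise, and then identify the stabilizers as free abelian groups of predictable rank.

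The first step is the duality. Since $\Gamma$ is torsion-free arithmetic, Borel and Serre (cf.~\cite{BS}) realize $\Gamma\backslash X_\infty$ as the interior of a compact manifold-with-corners which is a $K(\Gamma,1)$, where $X_\infty$ is the orientable real $d$-dimensional symmetric space of $G(F\otimes_\Q\R)$. Poincar\'e--Lefschetz duality then gives, functorially in the $\Gamma$-module $M$,
$$
\HH^{d-s}_{c}(\Gamma,M)\;\cong\;\HH_{s}(\Gamma,M).
$$
In the cocompact case ($B$ non-split) this is ordinary Poincar\'e duality. In the split case, the definition $\HH^i_c(\Gamma,A)=\HH^{i-1}(\Gamma,\Hom_\Z(\Delta_0,A))$ adopted in Section \ref{Cohomology} is the algebraic avatar of the same geometric duality: the long exact sequence of the Borel--Serre boundary identifies the boundary contribution with the Steinberg module $\Delta_0$ of $\SL_2/F$.

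Next, decompose $\Div(X)=\bigoplus_{\Gamma x\in\Gamma\backslash X}\Div(\Gamma x)$ as $\Gamma$-modules, with each summand the permutation module $\Ind_{\Gamma_x}^{\Gamma}\Z$. Since group homology commutes with direct sums in the coefficients, Shapiro's lemma yields
$$
\HH_{s}(\Gamma,\Div(X))\;=\;\bigoplus_{\Gamma x\in\Gamma\backslash X}\HH_{s}(\Gamma_x,\Z).
$$
Then I compute the stabilizers using Lemma \ref{groups}. If $x\notin X^{\qd}$, then $G(F)_x$ is torsion and hence $\Gamma_x=1$, so such orbits contribute nothing for $s\geq 1$. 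If $x\in X^{\qd,t}$, Lemma \ref{groups} identifies $G(F)_x$ with the norm-one subgroup of $F(x)^{\times}$; a place-by-place computation of the real rank of the associated algebraic $F$-torus---each split real and each complex place of $F$ contributes rank one, each inert real place contributes zero---gives rank exactly $|S_{\infty,sp}(x)|=t$. Dirichlet's unit theorem then forces $\Gamma_x\cong\Z^{t}$ as a torsion-free, finite-index subgroup of the integral norm-one units, and $\HH_{s}(\Z^{t},\Z)=\Lambda^{s}(\Z^{t})$, the isomorphism being canonical up to the ordering of generators, which accounts for the ``up-to-sign'' clause. Combining all three steps, only orbits with $t(x)\geq s$ contribute on either side, proving that the natural map from $\HH^{d-s}_{c}(\Gamma,\Div(X^{\qd,[s,r]}))$ is an isomorphism; restricting $X$ to $X^{\qd,t}$ gives the explicit formula.

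The main obstacle is the precise formulation of Borel--Serre duality in the split case: one must carefully match the algebraic definition of $\HH^i_c$ with the geometric compactly supported cohomology of $\Gamma\backslash X_\infty$, which requires an analysis of the Borel--Serre boundary producing the $\Hom_\Z(\Delta_0,-)$ coefficient and the index shift. Once this is in place, the orbitwise decomposition and the stabilizer calculation are formal.
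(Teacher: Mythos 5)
Your proof is correct and follows essentially the same route as the paper: both reduce compactly supported cohomology to homology via duality, decompose $\Div(X)$ into orbits as an induced module, apply Shapiro's lemma, invoke Lemma \ref{groups} together with Dirichlet's unit theorem to identify the stabilizer of a quadratic point $x$ as $\Z^{t}$ with $t=|S_{\infty,sp}(x)|$, and conclude via $\HH_{s}(\Z^{t},\Z)\cong\Lambda^{s}(\Z^{t})$. The only difference is cosmetic: the paper directly cites the Bieri--Eckmann duality group property from \cite{BS}, Theorem 11.4.2 (with dualizing module $\Z$ in the non-split case and $\Delta_0$ in the split case), whereas you re-derive the same isomorphism $\HH^{d-s}_c(\Gamma,M)\cong\HH_s(\Gamma,M)$ geometrically from Poincar\'e--Lefschetz duality on the Borel--Serre compactification, which is the argument underlying that theorem in the first place.
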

\begin{proof}
By \cite{BS}, Theorem 11.4.2, the group $\Gamma$ is a Bieri-Eckmann duality group (see for example \cite{KB}, Chapter VIII.10, for more details on duality groups).
If $G$ is non-split, the dualizing module of $\Gamma$ is $\Z$ and its cohomological dimension is $d$.
In the case $G$ is split, the dualizing module of $\Gamma$ is $\Delta_0$ and its cohomological dimension is $d-1$.
In any case, we have a canonical isomorphism
$$\HH^{d-i}_c(\Gamma,M)\xrightarrow{\cong}\HH_{i}(\Gamma,M)$$
for every $\Z[\Gamma]$-module $M$ and therefore, in particular,
$$\HH^{d-i}_c(\Gamma,\Div(X))\cong\HH_{i}(\Gamma,\Div(X)).$$
Let us choose a representative $x\in X$ of every $\Gamma$-orbit $[x]=\Gamma.x$ in $X$ and write $\Gamma_x$ for the stabilizer of $x$ in $\Gamma$.
Then, we have an isomorphism
$$\Div(X)\cong\bigoplus_{[x]\in \Gamma\backslash X} \Z[\Gamma_x\backslash\Gamma]$$
of $\Z[\Gamma]$-modules, which by Shapiro's Lemma for homology implies that
\begin{align*}
\HH_{i}(\Gamma,\Div(X)
&\cong\bigoplus_{[x]\in \Gamma\backslash X}\HH_{i}(\Gamma,\Z[\Gamma_x\backslash\Gamma])\\
&\cong\bigoplus_{[x]\in \Gamma\backslash X}\HH_{i}(\Gamma_x,\Z).
\end{align*}
Thus, the higher cohomology vanishes if $\Gamma_x$ is trivial.

Let us now assume that $\Gamma_x$ is non-trivial.
Since $\Gamma_x$ is torsion-free, it follows that $G(F)_x$ has a non-torsion element.
Lemma \ref{groups} then implies that $x$ is an element of $X^{\qd}$ and the stabilizer $G(F)_x$ is isomorphic to the group of elements of relative norm one in the quadratic extension $F(x)/F$.
Thus, the group $\Gamma_x=\Gamma\cap G(F)_x$ is isomorphic to a torsion-free finite index subgroup of the elements of relative norm one in the ring of integers of $F(x)$.
By Dirichlet's unit theorem $\Gamma_x$ is a free abelian group of rank $t=S_{\infty,sp}(x)$.

By the standard computation of the homology of free abelian groups (see for example \cite{KB}, Chapter V, Theorem 6.4) there exists an isomorphism
$$\HH_i(\Gamma_x)\cong \Lambda^{i} (\Z^t)$$
for every $i\geq 0$, which proves the claim.
\end{proof}

\subsection{Rigid meromorphic cocycles}\label{Cocycles}
We call an element $J$ of $\HH_c^{t}(\Gamma^{\p},\mathcal{M}^{\times})$ a rigid meromorphic cocycle of degree $i$ (and level $\Gamma^{\p}$).

The following theorem is a generalization of \cite{DV}, Theorem 1 (see also Theorem 2.12 of \textit{loc.~cit.}).
\begin{Thm}\label{sndthm}
Let $\Gamma^{\p}\subseteq G(F)$ be a $\p$-arithmetic congruence subgroup.
\begin{enumerate}[(i)]
\item\label{sndthm0} Let $J$ be a rigid meromorphic cocycle of degree $d-r$.
Then $\supp(\Div(J))$ is a finite union of $\Gamma^{p}$-orbits in $\Dr_{B_\p}^{\qd,r}$.
\item\label{sndthm1} For every $\Gamma^{\p}$-stable subset $X\subseteq \Dr_{B_\p}$ the canonical map
$$\HH_c^{d-r}(\Gamma^{\p},\mathcal{M}^{\times}\hspace{-0.2em}(X))\too\HH_c^{d-r}(\Gamma^{\p},\mathcal{M}^{\times})$$
is injective.
Its image consists of all rigid meromorphic cocycles $J$ of degree $d-r$ with $\supp(\Div(J))\subseteq X$.
In particular, the map
$$\HH_c^{d-r}(\Gamma^{\p},\mathcal{M}^{\times}\hspace{-0.2em}(\Dr_{B_\p}^{\qd,r}))\xlongrightarrow{\cong}\HH_c^{d-r}(\Gamma^{\p},\mathcal{M}^{\times})$$
is an isomorphism.
\item\label{sndthm2}
The canonical map
$$\HH^{i}_c(\Gamma^{\p},\mathcal{A}^{\times})\xrightarrow{\cong}\HH^{i}_c(\Gamma^{\p},\mathcal{M}^{\times})$$
is an isomorphism for all $i<d-r=r_1(B)+2\cdot r_2.$
\item
In the range $d-r\leq i \leq d-1$ the quotient
$$\HH^{i}_c(\Gamma^{\p},\mathcal{M}^{\times})/\HH^{i}_c(\Gamma^{\p},\mathcal{A}^{\times})$$
has countably infinite $\Z$-rank.
\end{enumerate}
\end{Thm}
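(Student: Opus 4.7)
The plan is to combine the long exact sequence in $\Gamma^\p$-cohomology attached to the short exact sequence of Proposition~\ref{divseq} with the structural results of Proposition~\ref{Theorem}. Abbreviating $A_i=\HH^i_c(\Gamma^\p,\mathcal{A}^\times)$, $M_i=\HH^i_c(\Gamma^\p,\mathcal{M}^\times)$ and $D_i=\HH^i_c(\Gamma^\p,\Div^\dagger(\Dr_{B_\p}))$, the long exact sequence reads
$$\cdots \too D_{i-1}\xrightarrow{\delta_{i-1}} A_i\too M_i\xrightarrow{\Div} D_i \xrightarrow{\delta_i} A_{i+1}\too \cdots .$$
Part~\eqref{sndthm2} is then immediate: Proposition~\ref{Theorem}\eqref{T1} yields $D_i=D_{i-1}=0$ for $i<d-r$, whence $A_i\xrightarrow{\sim} M_i$. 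Part~\eqref{sndthm0} is equally direct: Proposition~\ref{Theorem}\eqref{T2} identifies $D_{d-r}$ with $\bigoplus_{\Gamma^\p\backslash\Dr_{B_\p}^{\qd,r}}\Z$, so any $\Div(J)$ has finite support inside $\Dr_{B_\p}^{\qd,r}$.

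For part~\eqref{sndthm1} I would form the commutative ladder of long exact sequences associated to the inclusions $\mathcal{A}^\times\hookrightarrow\mathcal{M}^\times(X)\hookrightarrow\mathcal{M}^\times$ together with the compatible inclusion $\Div^\dagger(X)\hookrightarrow\Div^\dagger(\Dr_{B_\p})$. By Proposition~\ref{new} the latter inclusion makes $\HH^\bullet_c(\Gamma^\p,\Div^\dagger(X))$ a direct summand of $D_\bullet$; in particular it vanishes in degree $d-r-1$, so the relevant stretches of both long exact sequences are left-exact. A diagram chase then delivers simultaneously the injectivity of $\HH^{d-r}_c(\Gamma^\p,\mathcal{M}^\times(X))\to M_{d-r}$ and the identification of its image with those classes whose divisor is supported in $X$. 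The final isomorphism for $X=\Dr_{B_\p}^{\qd,r}$ follows from part~\eqref{sndthm0}.

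For the fourth part the long exact sequence identifies $M_i/A_i$ with $\ker\delta_i\subseteq D_i$, and $D_i$ has countably infinite $\Z$-rank in the range $d-r\leq i\leq d-1$ by Proposition~\ref{Theorem}\eqref{T3}. The task is therefore to exhibit countably many linearly independent classes annihilated by $\delta_i$. I plan to argue orbit by orbit: for each $\Gamma^\p$-orbit $\Gamma^\p x\subseteq\Dr_{B_\p}^{\qd,r}$ one constructs an explicit rigid meromorphic cocycle realizing the generator of the corresponding $\Z$-summand of $D_{d-r}$, e.g.\ via a suitably renormalized infinite product of the form $\prod_{\gamma\in\Gamma_x\backslash\Gamma^\p}(z-\gamma x)/(z-\gamma x^\sigma)$, with $x^\sigma$ the Galois conjugate of $x$; since $\Gamma^\p\backslash\Dr_{B_\p}^{\qd,r}$ is countably infinite, this settles $i=d-r$. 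For $d-r<i\leq d-1$ one cups these classes with elements of $\HH^{i-(d-r)}(\Gamma^\p,\Z)$ to produce the required families. The main obstacle is verifying convergence and $\Gamma^\p$-equivariance of these infinite products in the general quaternionic setting, and certifying that the cup-product operation preserves the linear independence of the constructed classes in the higher-degree range.
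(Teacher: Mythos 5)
Your treatments of parts (i), (ii) and (iii) are all correct. For (iii) you use exactly the paper's argument (the long exact sequence plus vanishing of $D_i$ for $i<d-r$). For (i) you invoke Proposition~\ref{Theorem}(ii), whereas the paper uses only Proposition~\ref{Theorem}(i) combined with the general remark that supports are always finite unions of orbits; both work. For (ii) you take a genuinely different but valid route: you run a ladder of long exact sequences for the pair $\mathcal{A}^\times\hookrightarrow\mathcal{M}^\times(X)$ versus $\mathcal{A}^\times\hookrightarrow\mathcal{M}^\times$ and chase the diagram, using that $\HH^\bullet_c(\Gamma^\p,\Div^\dagger(X))$ is a direct summand of $D_\bullet$ by Proposition~\ref{new}. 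The paper instead applies the long exact sequence to the single short exact sequence
$$0 \too \mathcal{M}^\times\hspace{-0.2em}(X)\too \mathcal{M}^\times\too \Div^\dagger(\Dr_{B_\p}\hspace{-0.2em}\smallsetminus X)\too 0,$$
for which $\HH^{d-r-1}_c$ of the rightmost term vanishes by Proposition~\ref{Theorem}(i); this immediately yields injectivity, and the image statement follows because, again by Proposition~\ref{new}, the kernel of $\HH^{d-r}_c(\Gamma^\p,\mathcal{M}^\times)\to\HH^{d-r}_c(\Gamma^\p,\Div^\dagger(\Dr_{B_\p}\smallsetminus X))$ is exactly the set of classes whose divisor is supported on $X$. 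Your ladder argument buys nothing extra and is a bit heavier to verify, but it is correct.

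Part (iv) is where there is a real gap, and it is the part you yourself flag as incomplete. Your plan is to produce countably many independent classes in $\ker\delta_i$ by constructing explicit rigid meromorphic cocycles from renormalized infinite products $\prod_{\gamma\in\Gamma_x\backslash\Gamma^\p}(z-\gamma x)/(z-\gamma x^\sigma)$ and then, for $d-r<i\leq d-1$, by cupping with classes in $\HH^{i-(d-r)}(\Gamma^\p,\Z)$. This is much harder than the theorem requires and it is not clear it can be carried out: the convergence and $\Gamma^\p$-cocycle structure of such products in the quaternionic setting is nontrivial and has not been established, and even granting the degree-$(d-r)$ construction, there is no reason $\HH^{i-(d-r)}(\Gamma^\p,\Z)$ should be large enough (or the cup products should remain independent) to fill out all the higher degrees. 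The paper avoids any explicit construction by a soft structural argument: via Proposition~\ref{first} and Lemma~\ref{second}, the groups $\HH^{i}_c(\Gamma^\p,\mathcal{A}^\times)$ are torsion modules over a suitable Hecke algebra, whereas by Proposition~\ref{Theorem}(iii) the torsion-free part of $D_i$ has countably infinite $\Z$-rank and is torsion-free over that Hecke algebra. The quotient $M_i/A_i$ is identified with $\ker(\delta_i)\subseteq D_i$, and since $\delta_i$ lands in a Hecke-torsion module its kernel must contain a subgroup of countably infinite $\Z$-rank. You should replace your constructive sketch with this Hecke-module comparison.
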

\begin{proof}
The first claim is an immediate consequence of Proposition \ref{Theorem} \eqref{T1}.

By definition we have a short exact sequence
$$0 \too \mathcal{M}^{\times}\hspace{-0.2em}(X)\too \mathcal{M}^{\times} \too \Div^{\dagger}(\Dr_{B_\p} \hspace{-0.2em}\smallsetminus X) \too 0.$$
By Proposition \ref{Theorem} \eqref{T1} we get an exact sequence in cohomology
$$0 \too \HH^{d-r}(\Gamma^{\p},\mathcal{M}^{\times}\hspace{-0.2em}(X))\too \HH^{d-r}(\Gamma^{\p},\mathcal{M}^{\times}) \too \HH^{d-r}(\Gamma^{\p},\Div^{\dagger}(\Dr_{B_\p}  \hspace{-0.2em}\smallsetminus X))
$$
This proves the second claim.

The third claim follows similarly by considering the long exact sequence associated to the short exact sequence of Proposition \ref{divseq}. 

Via Proposition \ref{first} and Lemma \ref{second} one sees that $\HH^{i}(\Gamma^{\p},\mathcal{A}^{\times})$ is a torsion-module for an appropriate Hecke-algebra (e.g.~one takes the Hecke algebra generated by all Hecke operators away from $\p$, the ramification set of $B$ and the primes $\q$ such that $\Gamma^\p$ is not maximal at $\q$.)
But it can be easily checked that the torsion-free part of the module $\HH^{i}_c(\Gamma^{\p},\Div^{\dagger}(\Dr_{B_\p}))$ is torsion-free over the Hecke algebra.
This implies the last claim.
\end{proof}

\begin{Rem}
Often one is only interested in projective meromorphic cocycles, i.e.~cohomology classes with values in $\mathcal{M}^{\times}\hspace{-0.2em}/\hspace{0.1em}\C_p^{\times}$.
The space of projective meromorphic cocycles is very closely related to the space of divisor values cohomology classes:
using Proposition \ref{first} one can deduce that $\HH^{i}_c(\Gamma^{\p},\mathcal{A}^{\times}\hspace{-0.2em}/\hspace{0.1em}\C_p^{\times})$ is finitely generated for all $i\geq 0$ and, thus, the map
$$\HH^{i}_c(\Gamma^{\p},\mathcal{M}^{\times}\hspace{-0.2em}/\hspace{0.1em}\C_p^{\times})\too \HH^{i}_c(\Gamma^{\p},\Div^{\dagger}(\Dr_{B_\p}))$$ has finitely generated kernel and cokernel for all $i\geq 0$.
\end{Rem}

\subsection{Rigid meromorphic singular moduli}\label{Moduli}
Let $\Gamma^{\p}\subseteq G(F)$ be a $\p$-arithmetic congruence subgroup and $x\in X^{\qd,t}$ a quadratic point.
We assume for the moment that the stabilizer $\Gamma^{\p}_x$ of $x$ in $\Gamma^{\p}$ is torsion-free and, thus, a free abelian group of rank $t$.
The point $x$ defines a $\Gamma^{\p}_x$-equivariant map
$$f_x\colon \Z \too \Div(\Gamma^{\p}x)\subseteq \Div(\Dr_{B_\p})$$ and, therefore, a homomorphism
$$f_{x,\ast}\colon \HH_{t}(\Gamma^{\p}_x, \Z)\too \HH_{t}(\Gamma^{\p}_x, \Div(\Gamma^{\p}x)).$$
The homology group on the left hand side is isomorphic to $\Z$.
We define $c_x$ to be the image of a generator.
(Thus, $c_x$ is unique up to a sign.)

Now let $J$ be an element of $\HH_c^{t}(\Gamma^{\p},\mathcal{M}^{\times}\hspace{-0.2em}(X))$ for some $\Gamma^{\p}$-stable subset $X\subseteq \Dr_{B_\p}$.
Let $\tilde{J}$ be the image of $J$ in $\HH^{t}(\Gamma^{\p},\mathcal{M}^{\times}\hspace{-0.2em}(X))$ and $\res_x(\tilde{J})$ its restriction to $\Gamma_x^{\p}$.
If $x\notin X$, we can define the value of $J$ at $x$ as
$$J(x)=\res_x(\tilde{J})\cap c_x \in \C_p^{\times}.$$

The maximal $t$ one may choose is $r=r_1(B)+r_2$.
But the divisor of a meromorphic cocycle of degree $t$ is trivial unless $t\geq r_1(B)+2\cdot r_2$ by part \eqref{T1} of Proposition \ref{Theorem}.
So, if one wants to consider rigid meromorphic singular moduli as in \cite{DV} one is forced to assume that the field $F$ is totally real.
In that case, the interesting - by which we mean genuinely meromorphic - cocycles, which we can evaluate at quadratic points live in the middle degree cohomology, i.e.~$t=r=r_1(B)$.

So let us assume that $F$ is totally real.
In that case we can define the value $J(x)$ for any $J\in \HH^{r}(\Gamma^{\p},\mathcal{M}^{\times})$ and any quadratic point $x\in X^{\qd,r}$ satisfying $x\notin \supp(\Div(J))$:
the torsion of the stabilizer group $\Gamma^{\p}_x$ has order at most two.
If the torsion is non-trivial, we fix a torsion-free (and thus free abelian) subgroup $H_x\subseteq \Gamma^{\p}_x$ of index 2.
Otherwise we simply put $H_x=\Gamma^{\p}_x.$
As before, we consider the class $c_x\in \HH_{r}(H_x, \Div(\Gamma^{\p}x)).$
By Theorem \ref{sndthm} \eqref{sndthm1} we can view $J$ as an element of $\HH^{r}(\Gamma^{\p},\mathcal{M}^{\times}\hspace{-0.2em}(\supp(\Div(J)))).$
In particular, the value of $J$ at $x$
$$J(x)=\tilde{J}\cap c_x \in \C_p^{\times}$$
is well-defined.
If $\Gamma^{\p}_x$ is torsion-free, then the set $\left\{J(x),J(x)^{-1}\right\}$ is independent of the choice of generator.
If $\Gamma^{\p}_x$ is not torsion-free, then the set $\left\{J(x)^2,J(x)^{-2}\right\}$ is independent of all choices.

\begin{Rem}
Analogues of the algebraicity conjecture (see \cite{DV}, Conjecture 3.5) for rigid meromorphic singular moduli in the case of quaternion algebras over totally real number fields were formulated and numerically verified by Guitart, Masdeu and Xarles (cf.~\cite{GMX}).
\end{Rem}

\bibliographystyle{abbrv}
\bibliography{bibfile}

\end{document}